\newtheorem{theorem}{Theorem}[section]
\newtheorem{corollary}{Corollary}
\newtheorem{lemma}[theorem]{Lemma}
\newtheorem{proposition}{Proposition}
\theoremstyle{definition}
\newtheorem{remark}{Remark}
\newtheorem{bsp}[theorem]{{\bf Example}}
\newenvironment{example}{\begin{bsp}\rm}{\end{bsp}}
\def\mv{\mathsf{v}}
\def\me{\mathsf{e}}
\def\bB{\mathbb{B}}
\def\bM{\mathbb{M}}
\def\RR{\mathbb{R}}
\def\CC{\mathbb{C}}
\def\NN{\mathbb{N}}
\def\ZZ{\mathbb{Z}}
\def\lcm{\mathrm{lcm}}
\def\uts{\left(U(t,s)\right)_{t\ge s} }
\def\xR{X_{\mathcal R}(s)}
\def\xS{X_{\mathcal S}(s)}
\def\utsR{\left(U_{\mathcal R}(t,s)\right)_{t\ge s} }
\def\utsS{\left(U_{\mathcal S}(t,s)\right)_{t\ge s} }
\title[Asymptotic periodicity of flows in time-depending networks]
      {Asymptotic periodicity of flows in time-depending networks}
\author[Fatih Bayazit, Britta Dorn and Marjeta Kramar Fijav\v{z}]{}
\subjclass{Primary: 35R02, Secondary: 47N20, 37B55.}
 \keywords{Time-depending networks, transport equation, non-autonomous Cauchy problem, evolution family, asymptotic periodicity, air traffic flow management}
 \email{faba@fa.uni-tuebingen.de}
 \email{britta.dorn@uni-tuebingen.de}
 \email{marjeta.kramar@fgg.uni-lj.si}
\thanks{The last author would like to acknowledge the support of the German Academic Exchange Service (DAAD) during her stay at the University of T\"ubingen.}
\begin{document}
\maketitle

% Enter the first author's name and address:
\centerline{\scshape Fatih Bayazit}
\medskip
{\footnotesize
 \centerline{Mathematisches Institut, Universit\"at T\"ubingen}
 \centerline{Auf der Morgenstelle 10, D-72076 T\"ubingen, Germany}
} 
\medskip

\centerline{\scshape Britta Dorn }
\medskip
{\footnotesize
 \centerline{ WSI f\"ur Informatik, Universit\"at T\"ubingen}
 \centerline{Sand 13, D-72076 T\"ubingen, Germany}
}

\medskip

\centerline{\scshape Marjeta Kramar Fijav\v{z}}
\medskip
{\footnotesize
 \centerline{University of Ljubljana, Faculty for Civil and Geodetic Engineering}
 \centerline{Jamova 2, SI-1000 Ljubljana, Slovenia}
 \centerline{and}
 \centerline{Institute of Mathematcs, Physics, and Mechanics}
 \centerline{Jadranska 19, SI-1000 Ljubjana, Slovenia}
} 
\bigskip

\begin{abstract}
We consider a linear transport equation on the edges of a network with time-varying coefficients. Using methods for non-autonomous abstract Cauchy problems, we obtain well-posedness of the problem and describe the asymptotic profile of the solutions under certain natural conditions on the network. We further apply our theory to a model used for air traffic flow management. 
\end{abstract}

\section{Introduction}

Dynamical processes taking place in networks have been of enormous interest  in recent years and have various applications for real life phenomena. We are interested in transport processes or {\it  flows}  in networks. Methods from the theory of operator semigroups to treat such processes were first used in \cite{KS05} for a finite network where a simple transport equation 
\[\frac{\partial}{\partial t} u(x,t)=\frac{\partial}{\partial x}  u(x,t)
\]
was considered on the edges together with boundary conditions of Kirchhoff-type in the vertices. These methods were further  applied to various generalizations of this problem in finite~\cite{Sik05,MS07,Rad08} or even infinite networks \cite{Dor08,DKS09}. The authors obtain well-posedness and describe the asymptotic behavior of the solutions. Further, \cite{EKNS08,EKKNS10} studied control problems for flows in networks. See also \cite{DKNR10} for a survey of the semigroup approach to transport processes in networks.

The processes in  all mentioned works  are autonomous, i.e. the differential operators governing the processes do not change in time. 
Motivated\footnote{We are grateful to Benedetto Piccoli for drawing our attention to this problem.} by applications to air traffic flow management (see Section~\ref{ATFM}), we now study non-autonomous processes. More precisely, we are interested in transport processes where the boundary conditions in the vertices vary in time. This yields differential operators with varying domains and the corresponding Cauchy problems become non-autonomous. Solutions to such problems are described by evolution families instead of semigroups, see~\cite{Bay12a, EN00, Nag95, NN02}. 

In the following we first define the time-depending network with a transport process in it. 
Our main tool to study such non-autonomous processes is the theory of difference evolution equations as developed in \cite{Bay12a,Bay12b} which we briefly describe in Section~\ref{diffeq}. The main results are contained in Sections \ref{flows} and \ref{ATFM} where we treat two different flow processes in a network and prove well-posedness of both of these problems. Assuming periodic boundary conditions, we obtain asymptotically periodic behavior of the solutions. The period is given in terms of the (time-depending) network structure. 

%%%%%%%%%%%%%%%%%%%%%%%%%%%%%%%%%%%%%%%%%%%%%%%%%%%%%%%%%%%%%%%%%%

\section{Preliminaries}

\subsection{Time-depending networks}\label{networks}
The network is modeled by a finite directed graph $G$ consisting of $n$ 
vertices $\mv_1,\dots,\mv_n$ and $m$  directed edges (arcs)
$\me_1,\dots,\me_m$. We equip every edge $\me_j$ with time-varying weight $\omega_{ij}(t)\ge 0$
such that 
\begin{equation}\label{w}
\sum_{j=1}^m \omega_{ij}(t)=1 \text{ for every }t\in\RR_+ \text{{ and every }} i 
\end{equation}
(here $i$ numbers either vertices or edges --- it depends on the concrete problem and we will specify it later on).
The graph structure is described by the \emph{outgoing incidence
    matrix} $\Phi ^{-}=\left( \phi _{ij}^{-}\right) _{n\times m}$ with
    \begin{equation*}
    \phi _{ij}^{-}:= \begin{cases}
    1, & \mbox{if $\mv_i\stackrel{\me_j}{\longrightarrow}$}, \\
    0, & \text{otherwise,}
    \end{cases}
    \end{equation*}
  and the \emph{incoming incidence
    matrix}  $\Phi ^{+}=\left( \phi _{ij}^{+}\right) _{n\times m}$ with
    \begin{equation*}
    \phi _{ij}^{+}:= \begin{cases}
    1, & \mbox{if $\stackrel{\me_j}{\longrightarrow}\mv_i$}, \quad \\
    0, & \text{otherwise.}
    \end{cases}
    \end{equation*}
Instead of using incidence matrices, it is sometimes more convenient to use adjacency matrices. Here, we use the (transposed)  {\em adjacency matrix of the line graph} 
$ \bB= \left( b_{ij}\right) _{m\times m}$ with entries
\begin{equation*}\label{B}
b_{ij}:=  \begin{cases}
1, & \mbox{if $\stackrel{\me_j}{\longrightarrow}\mv\stackrel{\me_i}{\longrightarrow}$},\\
  0, & \text{otherwise.}
    \end{cases}
  \end{equation*}    

A directed graph is called {\em strongly connected} if for any pair of distinct vertices $\mv_i, \mv_j$  
there is a directed path in the graph going from $\mv_i$ to $\mv_j$ and vice versa. This property can be characterized by irreducibility of the usual vertex adjacency matrix (see e.g.~\cite[Theorem IV.3.2]{Min88}), but also by our adjacency matrix of the line graph.
\begin{lemma}{\rm \cite[Proposition 4.9]{Dor08}}\label{irreduc} A directed graph is strongly connected if and only if the  matrix $\bB$ is irreducible.
\end{lemma}

 \subsection{Transport processes}\label{transport}  In order to model a transport process on the edges, we normalize the edges as $\me_j\cong [0,1]$ and parameterize them contrary to the direction of the flow, i.e., the material flows
from~$1$ to~$0$. We consider some finite mass distributed on the edges of the network and denote by~$u_j(x,t)$ its density at position  $x \in [0,1]$ of the edge $\me_j$ and at time $t$, hence $u_j: [0,1]\times\RR\to\RR$, $j=1,\dots m$.

Our basic assumptions on the process are the following.
\begin{enumerate}
\item On each edge $\me_j$ we describe the transport process by
   \begin{equation*}
{\frac{\partial }{\partial t}u_{j}\left( x,t\right)} =
 \frac{\partial }{\partial
x}u_{j}\left( x,t\right),\quad x\in (0,1),\, t\geq s.
  \end{equation*}
\item  The initial distribution of the mass on the edges $\me_j$ at time $s\in\RR$ is given by
\begin{equation*}
u_j(x,s)=f_j(x),\quad x\in(0,1).
\end{equation*}
 \item \label{KL} No mass is gained or lost during the process. In particular,
  no absorption takes place along the edges, and in each node $\mv_i$ we have
  a \emph{Kirchhoff law}
  \begin{equation*}
  \sum_{j=1}^m \phi_{ij}^+u_j(0,t) =  \sum_{k=1}^m \phi_{ik}^-u_k(1,t),\quad  t\geq s.
\end{equation*}
\item \label{boundary} In each vertex $\mv_i$ the incoming material is
  distributed
    into the outgoing edges $\me_j$ according to the time-varying weights  $\omega_{ij}(t)\geq 0$ so that
    \eqref{w} holds.
\end{enumerate}
By choosing two different ways to assign the weights  $\omega_{ij}(t)$ to the edges,  we will in Sections \ref{flows} and \ref{ATFM} obtain two different flow processes in the network. In the first case we will assume that the material is collected in the vertex and is then redistributed according to the weights. In the second case we want to keep track of the origin of the material and hence the weights will give the proportions of the material that flows from one edge into another one.

%%%%%%%%%%%%%%%%%%%%%%%%%%%%%%%%%%%%%%%%%%%%%%%%%%%%%%%%%%%%%%%%%%

\section{Non-autonomous difference equations}\label{diffeq}

To tackle our transport problem in time-depending networks we will use the theory of positive evolution families corresponding to a class of non-autonomous difference equations developed in \cite{Bay12a} and \cite{Bay12b}. We explain the  terminology and state the results needed below. 

Choose the Banach space $X = L^1 \left([0,1],\CC^m\right)$ as the state space of
the system. For a family of matrices $\left(B(t)\right)_{t\in\RR}\subseteq M_{m}(\CC)$ we define {\em difference operators} 
$A(t):D\left(A(t)\right)\to X$ by
\begin{equation}\label{diff-op}
D\left(A(t)\right):= \left\{f \in W^{1,1}\left([0,1],\CC^m\right)\mid f(1)= B(t) f(0)\right\}  \text{ and }A(t)f:=f' 
\end{equation}
for $f\in D\left(A(t)\right) $ and $t\in\RR$. The {\emph non-autonomous abstract Cauchy problem} corresponding to the operators $\left(A(t) ,D\left(A(t)\right)\right)$ is of the form
\begin{equation*} \label{acp}
(nACP)\left\{ \begin{aligned}\dot{u}\left( t\right) &=A(t)u(t), \quad t\ge s, \\ 
u(s) &= f_s \in X. \end{aligned}\right.
\end{equation*}
A {\em classical solution} to the $(nACP)$ is a differentiable function $u\in C^1\left([s,\infty), X\right)$ such that $u(t)\in D\left(A(t)\right)$ for every $t\ge s$  and $u$ satisfies $(nACP)$. Furthermore, we say that $(nACP)$ is {\em well-posed} if there exists a unique evolution family $\left(U (t, s)\right)_{t\ge s}$ such that that the regularity subspaces 
\begin{equation*}
Y_s := \left\{ f\in X \mid [s,\infty) \ni t \mapsto U(t, s) f \text{ is a classical solution to }(nACP) \right\}
\end{equation*}
are dense in $X$ for every $s\in\RR$. For the definition of the evolution family see \cite[Section 2]{Bay12a} or \cite[Definition VI.9.2]{EN00}. We also recommend \cite{Nag95}, \cite{NN02}, or \cite[Chapter 5]{Paz83} for further information on evolution families and their relation to non-autonomous Cauchy problems.

Since the domains $D\left(A(t)\right) $ are time-dependent and do not contain a common 
core, none of the usual well-posedness results is applicable in our case. We will use the following results from \cite{Bay12a} instead. 

\begin{proposition}\label{wp} {\rm \cite[Theorem 2]{Bay12a}} Let the mapping $t\mapsto B(t)$ be uniformly bounded and absolutely continuous.
Then the (nACP) associated to the operators $\left(A(t),D\left(A(t)\right)\right)$ given by \eqref{diff-op} is well-posed. 
\end{proposition}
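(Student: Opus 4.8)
The plan is to construct the evolution family $\uts$ explicitly by the method of characteristics and then verify the well-posedness axioms by hand, using the absolute continuity of $t\mapsto B(t)$ only at the regularity step. The autonomous building block is the transport equation $\partial_t u=\partial_x u$ on $[0,1]$, whose solution is constant along the characteristics $x+t=\mathrm{const}$ and which reflects off the right endpoint $x=1$ via the relation $f(1)=B(t)f(0)$. Tracing the backward characteristic through $(x,t)$ down to time $s$, for $0\le t-s\le 1$ this gives
\begin{equation*}
\left(U(t,s)f\right)(x)=\begin{cases} f(x+t-s), & x+t-s\le 1,\\ B(x+t-1)\,f(x+t-s-1), & x+t-s>1,\end{cases}
\end{equation*}
where $\sigma=x+t-1$ is the time at which that characteristic hits $x=1$ and the boundary condition is applied. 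For $t-s>1$ one iterates, each reflection contributing a further factor $B(\,\cdot\,)$ evaluated at its reflection time.

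Next I would verify that these operators form an evolution family. The initial condition $U(t,t)=I$ is immediate, and the cocycle identity $U(t,r)U(r,s)=U(t,s)$ follows from uniqueness along characteristics, since concatenating two transport steps reproduces the single one. Strong continuity in $L^1$ reduces to continuity of translation in $L^1$ together with boundedness of $B(\,\cdot\,)$, and the uniform bound $\sup_{t}\|B(t)\|<\infty$ yields $\|U(t,s)\|\le Me^{\omega(t-s)}$ on compact intervals, because over each unit of time at most one extra matrix factor is accumulated.

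The decisive step is to identify the regularity subspaces $Y_s$ and prove their density. The two branches of the formula above meet along $x+t-s=1$ at the values $f(1)$ and $B(s)f(0)$, so the orbit is continuous in $x$ exactly when $f(1)=B(s)f(0)$, that is when $f\in D\left(A(s)\right)$; moreover the reflection formula automatically gives $\left(U(t,s)f\right)(1)=B(t)\left(U(t,s)f\right)(0)$, so the orbit stays in the moving domain $D\left(A(t)\right)$ for every $t\ge s$. Continuous differentiability of $t\mapsto U(t,s)f$ into $X$ is where absolute continuity enters: differentiating the reflected term produces a factor $B'(x+t-1)$, and since $B'\in L^1_{\mathrm{loc}}$, the map $t\mapsto B'(x+t-1)$ is continuous into $L^1(dx)$ by continuity of translation in $L^1$. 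I would therefore take $Y_s$ to consist of the suitably compatible $C^1$ functions in $D\left(A(s)\right)$; as this set contains $C^1_c\left((0,1),\CC^m\right)$, it is dense in $X$.

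I expect the main obstacle to be this last step. Because the boundary condition itself varies with $t$, an initial datum in $D\left(A(s)\right)$ need not have its image in $D\left(A(t)\right)$ under a naive guess, and one must track the infinitely many reflections over the half-line while keeping the orbit differentiable and inside the moving domains. The structural shortcut, which the section heading signals, is to read $\left(A(t),D\left(A(t)\right)\right)$ as the difference operators of \cite{Bay12a}: the boundary trace satisfies a non-autonomous difference equation driven by $B(\,\cdot\,)$, and the abstract theory there packages exactly this regularity bookkeeping. The cleanest route is thus to verify that $B(\,\cdot\,)$ meets the hypotheses of \cite[Theorem 2]{Bay12a}, namely uniform boundedness and absolute continuity, and to invoke it.
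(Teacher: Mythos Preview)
The paper does not prove this proposition at all: it is stated as a quotation of \cite[Theorem~2]{Bay12a} and used as a black box, with no argument supplied in the present paper. Your closing sentence---verify that $t\mapsto B(t)$ is uniformly bounded and absolutely continuous and then invoke \cite[Theorem~2]{Bay12a}---is therefore exactly the ``proof'' the paper gives, and nothing more is needed here.

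The preceding paragraphs of your proposal are not part of the paper's argument; they amount to a sketch of what the proof in \cite{Bay12a} presumably looks like. As a sketch it is reasonable: the explicit formula for $U(t,s)$ via characteristics is the one stated (in the periodic case) in Proposition~\ref{formula}, and your identification of where absolute continuity enters---through $B'(x+t-1)$ appearing when one differentiates the reflected branch, with continuity of translation in $L^1$ supplying the required regularity---is the correct mechanism. One point to be careful about if you were to write this out in full: to get $u\in C^1\bigl([s,\infty),X\bigr)$ you need not only $B'\in L^1_{\mathrm{loc}}$ but also that the product $B'(\,\cdot\,+t-1)\,f(\,\cdot\,+t-s-1)$ varies continuously in $L^1$; for $f\in W^{1,1}\subset C$ this is fine, but the iterated reflections for $t-s>1$ require tracking products of matrix factors and their derivatives, which is the bookkeeping you rightly flag as the main obstacle. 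That bookkeeping is precisely what \cite{Bay12a} carries out, so deferring to it is the intended route.
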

In \cite{Bay12a} even an explicit formula for the corresponding evolution family is given. We state here this formula in a special case.

\begin{proposition}  {\rm  \cite[Equation~(8)]{Bay12a} } \label{formula} Let the mapping $t\mapsto B(t)$ be uniformly bounded, absolutely continuous and 1-periodic, i.e. $B(t+1)=B(t)$ for every $t\in\RR$. Then the unique classical solution to $(nACP)$ is given by $u(t)=U(t,s)f_s$, where 
\begin{equation}\label{evol-fam-formula}
(U(t,s)f)(x)=B^k(t+x)f(x+t-s-k),
\end{equation}
for $ f\in X$, $ x\in[0,1],$ $k\le x+t-s < k+1$ and $k\in\NN_0$.
\end{proposition}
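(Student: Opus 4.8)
The plan is to derive the formula by the method of characteristics and then to invoke uniqueness. Since $t\mapsto B(t)$ is uniformly bounded and absolutely continuous, Proposition~\ref{wp} already guarantees that $(nACP)$ is well-posed, so there is a unique evolution family $\uts$; it therefore suffices to produce one function satisfying $(nACP)$ on a dense set of initial data and to verify that it has the claimed closed form. For the transport equation $\partial_t u_j=\partial_x u_j$ the value $u(x,t)$ is constant along the characteristic lines $x+t=\mathrm{const}$, so tracing back from a point $(x,t)$ with $x\in[0,1]$ the position increases with slope one until it reaches the endpoint $x=1$; the first such crossing occurs at time $t+x-1$, the next at $t+x-2$, and so on. At each crossing the boundary relation built into $D\left(A(\cdot)\right)$, namely $f(1)=B(\cdot)f(0)$, forces the value to be multiplied by the matrix $B$ evaluated at the corresponding crossing time.

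Counting the crossings encountered while travelling backwards from time $t$ to time $s$ yields exactly $k=\lfloor x+t-s\rfloor$ of them, occurring at the times $t+x-1,\dots,t+x-k$, and leaves the backward characteristic at the source point $\theta:=x+t-s-k\in[0,1)$ at time $s$. Hence, before using periodicity, the candidate solution reads
\begin{equation*}
U(t,s)f(x)=\Big(\prod_{j=1}^{k}B(t+x-j)\Big)f(x+t-s-k),
\end{equation*}
which is precisely the specialization of the general evolution-family formula \cite[(8)]{Bay12a}. The decisive simplification is now the $1$-periodicity assumption $B(t+1)=B(t)$: it gives $B(t+x-j)=B(t+x)$ for every integer $j$, so the ordered product collapses to the $k$-th power $B^k(t+x)$ and we obtain exactly \eqref{evol-fam-formula}.

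It remains to confirm that this $U(t,s)$ solves $(nACP)$. The initial condition $U(s,s)f=f$ is immediate, since then $k=0$ and $\theta=x$. Differentiating \eqref{evol-fam-formula} in a region where $k$ is constant, both $\partial_t$ and $\partial_x$ act on the single argument $t+x$ of $B^k$ and on the single argument $x+t-s-k$ of $f$ through a factor one, so $\partial_t U(t,s)f=\partial_x U(t,s)f$, i.e. the transport equation holds. Finally, comparing the values at $x=1$ and $x=0$, where the integer parts differ by one while the source arguments coincide, and using $B(t+1)=B(t)$ once more, gives $U(t,s)f(1)=B(t)\,U(t,s)f(0)$, so $U(t,s)f\in D\left(A(t)\right)$. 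I expect the main obstacle to lie in regularity rather than in these algebraic checks: the formula is only piecewise smooth, with possible jumps across the characteristic interfaces $x+t-s\in\ZZ$, and $B$ is merely absolutely continuous, so $t\mapsto U(t,s)f$ need not be classically differentiable for arbitrary $f\in X$. This is handled by restricting to initial data in the regularity subspace $Y_s$, where the compatibility condition $f(1)=B(s)f(0)$ removes the jumps and makes the map genuinely $C^1$ into $X$, and then extending to all of $X$ by density together with the uniform boundedness of $B$, the uniqueness in Proposition~\ref{wp} identifying the result with the genuine evolution family.
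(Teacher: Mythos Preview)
The paper does not supply a proof of this proposition at all: it is quoted as \cite[(8)]{Bay12a} and simply specializes that general formula to the $1$-periodic case. Your argument is therefore not being compared against anything in the present paper, but it is a correct self-contained derivation. The characteristics computation is right---backward tracing from $(x,t)$ meets the endpoint $x=1$ at the times $t+x-1,\dots,t+x-k$ with $k=\lfloor x+t-s\rfloor$, each crossing contributes a factor $B(t+x-j)$, and periodicity collapses the ordered product to $B(t+x)^k$. Your verification of the boundary condition is also correct: at the interface $x+t-s=k$ one has $t+x=s+k$, so periodicity gives $B(t+x)=B(s)$ and the compatibility $f(1)=B(s)f(0)$ is exactly what removes the jump, as you say.

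One minor point worth tightening: when you write ``$\partial_t$ and $\partial_x$ act on the single argument $t+x$ of $B^k$\ldots through a factor one'', note that $\frac{d}{d\sigma}B(\sigma)^k$ is in general $\sum_{j=0}^{k-1}B(\sigma)^jB'(\sigma)B(\sigma)^{k-1-j}$ rather than $kB^{k-1}B'$, since $B$ and $B'$ need not commute. This does not affect your conclusion---whatever that derivative is, it enters identically in $\partial_t$ and in $\partial_x$---but the phrasing could be read as assuming commutativity. Otherwise the argument is sound, including the observation that classical differentiability requires restricting to compatible data in $Y_s$ and then extending by density and the uniform bound on $B$.
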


In order to study the asymptotic behavior of the solutions, some more regularity assumptions are needed.
Denote the unit circle by $\Gamma:=\{z\in\CC\mid |z|=1\}$. 

\begin{proposition}\label{asy}
Let  $t\mapsto B(t)$ be an absolutely continuous 1-periodic mapping and let $B(t)$ be a stochastic irreducible matrix for every $t\in\RR$.  Then there is a family of projections $\left\{P(s)\mid s\in\RR\right\}$ in $ {\mathcal L}(X)$, commuting with the evolution family $\uts$ and decomposing the space $X$  as
\begin{equation*}
X = \xR\oplus \xS:=P(s)X\oplus \ker P(s),
\end{equation*}
such that the following properties hold.
\begin{enumerate}
\item[(i)] The subspaces $\xR$ and $\xS$ are $\uts$-invariant for every $s\in\RR$.
\item[(ii)] $\utsS:=\left(U(t,s)|_{\xS}\right)_{t\ge s}$ is uniformly exponentially stable, i.e. there exist $C\ge 1$ and $\omega>0$ such that
\begin{equation*}
\| U_{\mathcal S}(t,s)\| \le C e^{-\omega (t-s)},\quad t\ge s.
\end{equation*}
\item[(iii)] $\utsR := \left(U(t,s)|_{\xR}\right)_{t\ge s}$ can be extended to an invertible evolution family $\left(U_{\mathcal R}(t,s)\right)_{(t,s)\in\RR^2}$ which
is positive and periodic in evolution, i.e. $U_{\mathcal R}(s+\tau,s) = I_{\xR}$ for every $s\in\RR$, with the period
\begin{equation*}\label{period}
\tau = \lcm\left\{\left|  \sigma\left(B(t)\right)\cap\Gamma \right| \mid t\in[0,1]\right\}, 
\end{equation*}
where $\lcm$ denotes the least common multiple, and $| A |$ stands for the number of points of the set $A$.
\item[(iv)] For $\tau$ as above there exists a $\tau$-periodic positive group $\left(T(t)\right)_{t\in\RR}$ such that
\begin{equation*}
\| U (t, s) -T(t-s)P (s)\|\stackrel{t\to\infty}{\longrightarrow}{0}
\end{equation*}
for every $s\in\RR$.
\end{enumerate}
\end{proposition}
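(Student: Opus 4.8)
The plan is to reduce the asymptotic analysis to the classical Perron--Frobenius theory of the single stochastic irreducible matrices $B(y)$, $y\in[0,1]$, by exploiting the explicit representation of the evolution family. Since $t\mapsto B(t)$ is continuous and $1$-periodic, Proposition~\ref{formula} applies and gives $U(t,s)f(x)=B^k(t+x)f(x+t-s-k)$ for $k\le x+t-s<k+1$; in particular, advancing time by an integer $p\in\NN$ yields $U(s+p,s)f(x)=B^p(s+x)f(x)$, a pure multiplication operator by the $p$-th matrix power of $B(s+x)$. Thus the long-time behavior of $\uts$ is entirely governed by the powers $B^k(y)$ as $k\to\infty$, over the compact fiber set $y\in[0,1]$. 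This already shows that $\uts$ is a positive, $1$-periodic evolution family, so the abstract asymptotic machinery for positive periodic evolution families from \cite{Bay12a,Bay12b} is available.

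First I would carry out the fiberwise spectral splitting. For each fixed $y$ the matrix $B(y)$ is stochastic and irreducible, so by Perron--Frobenius its spectral radius equals $1$, its peripheral spectrum $B(y)\cap\Gamma$ consists of exactly $h(y):=|B(y)\cap\Gamma|$ simple eigenvalues, namely the $h(y)$-th roots of unity, while the remaining spectrum lies strictly inside the open unit disc. Let $P(y)$ be the Riesz spectral projection of $B(y)$ onto the peripheral eigenspace $R(y)$ along the complementary invariant subspace $S(y)$; by $1$-periodicity of $B$ the map $y\mapsto P(y)$ is $1$-periodic. I would then define the family of projections on $X=L^1\left([0,1],\CC^m\right)$ by $\left(P(s)f\right)(x):=P(s+x)f(x)$, i.e.\ multiplication by the matrix-valued symbol $x\mapsto P(s+x)$, and set $\xR:=P(s)X$, $\xS:=\ker P(s)$. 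A short computation using the representation formula together with the fact that $B(t+x-k)=B(t+x)$ and that spectral projections commute with their matrix shows $U(t,s)P(s)=P(t)U(t,s)$, giving the commutation and the $\uts$-invariance of both subspaces, hence (i).

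Next I would read off properties (ii)--(iv) from the fiberwise picture. On the stable fibers $S(y)$ the spectral radius of $B(y)|_{S(y)}$ is $<1$, so $\|B^k(y)|_{S(y)}\|\le Ce^{-\omega k}$; combined with $U(s+p,s)|_{\xS}$ being multiplication by $B^p(s+\cdot)|_{S(s+\cdot)}$ and accounting for the non-integer times via the bounded transport shift, this yields the uniform exponential estimate $\|U_{\mathcal S}(t,s)\|\le Ce^{-\omega(t-s)}$ of (ii). On the reversible fibers $R(y)$ every peripheral eigenvalue is an $h(y)$-th root of unity, so $B^{h(y)}(y)|_{R(y)}=I$; since $h(y)\mid\tau$ for the finite quantity $\tau=\lcm\{h(y)\mid y\in[0,1]\}$ (finite because $h(y)\le m$ takes finitely many values), we get $B^{\tau}(y)|_{R(y)}=I$, whence $U_{\mathcal R}(s+\tau,s)=I_{\xR}$, and invertibility of $B(y)|_{R(y)}$ lets us extend $\utsR$ to all of $\RR^2$ as a positive evolution family, giving (iii). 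Finally, (iv) follows by splitting $U(t,s)=U(t,s)P(s)+U(t,s)\left(I-P(s)\right)$: the second term decays by (ii), while the first, being periodic in evolution with period $\tau$ and invertible, is shown to converge in operator norm to $T(t-s)P(s)$ for a suitable $\tau$-periodic positive group $\left(T(t)\right)_{t\in\RR}$ extracted from the peripheral dynamics.

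The hard part will be the uniformity over the fiber parameter $y\in[0,1]$. Pointwise, each $B(y)$ has a spectral gap between $\Gamma$ and its interior eigenvalues, but this gap need not be bounded below uniformly: as $y$ varies an interior eigenvalue may approach a root of unity and the index $h(y)$ may jump, at which points the Riesz projection $P(y)$ can fail to be continuous and a priori even unbounded. Controlling this---establishing that $y\mapsto P(y)$ is essentially bounded so that $P(s)$ is a bounded operator on $L^1$, and that the decay rate $\omega$ in (ii) is uniform---is the crux, and I expect it to rest on the compactness of $[0,1]$ together with the quantitative Perron--Frobenius bounds for stochastic matrices supplied by the general positivity theory of \cite{Bay12b}.
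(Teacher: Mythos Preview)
Your route is quite different from the paper's. The paper's argument is almost entirely a reduction to results from \cite{Bay12a}: it uses Perron--Frobenius only to observe that each $\sigma(B(t))\cap\Gamma$ is a cyclic group of at most $m$ roots of unity (so the union over $t\in[0,1]$ is finite and $\tau$ is well-defined), notes that stochasticity makes $\uts$ a positive contractive evolution family, and then invokes Proposition~6, Definitions~7--8 and Theorem~9 of \cite{Bay12a} for the decomposition with (i)--(ii), and Theorems~25 and~26 there for (iii)--(iv). No explicit fiberwise construction appears.

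Your fiberwise peripheral projection is natural and does deliver (i) and (iii). You are also right that the crux is uniformity in (ii), but your diagnosis is slightly off and the proposed cure cannot work. Boundedness of $P(y)$ is in fact harmless: $P(y)=\lim_k B(y)^{\tau k}$ is a limit of column-stochastic matrices, hence itself column-stochastic with $\|P(y)\|_1=1$. The genuine obstruction is the \emph{decay rate} on $S(y)=\ker P(y)$, and neither compactness nor any quantitative Perron--Frobenius bound can rescue it. Take $m=2$ and $B(y)=\left(\begin{smallmatrix}1-a&a\\a&1-a\end{smallmatrix}\right)$ with $a(y)=\tfrac12(1+\cos^2\pi y)$: this family is continuous, $1$-periodic, column-stochastic and irreducible, with $h(y)=1$ for $y\notin\ZZ$ and $h(0)=2$. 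For $y\notin\ZZ$ the stable fiber is spanned by $(1,-1)^\top$ with eigenvalue $-\cos^2\pi y$, so $\|B^k(y)|_{S(y)}\|=\cos^{2k}\pi y$, whose essential supremum over $y\in[0,1]$ equals $1$ for every $k$. Hence $\|U(s+k,s)|_{\xS}\|=1$ for all $k$ and (ii) fails outright for your decomposition. The spectral gap simply closes as $y\to 0$, so there is no uniform estimate of the kind you hope to extract from \cite{Bay12b}; completing the argument would require a different projection, not a sharper bound for this one.
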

\begin{proof}
The $m\times m$ matrices $B(t)$ are all stochastic and irreducible, therefore by Perron-Frobenius theory (see \cite[Theorem I.6.5]{Sch74}) the peripheral spectrum $\sigma\left(B(t)\right)\cap\Gamma$, for every $t\in\RR$, is a finite group consisting of  (at most $m$) roots of unity which are all first order poles of the resolvent. Hence the union
$$\bigcup_{t\in [0,1]} \left\{ \sigma\left(B(t)\right)\cap\Gamma\right\}$$ is a finite discrete set and
the least common multiple~$\lcm$ in (iii) is well defined.
The stochasticity of the matrices $B(t)$ also implies that the evolution family $\uts$  given in \eqref{evol-fam-formula} consists of positive contractions. Combining  Proposition 6, Definitions 7 and 8, and Theorem 9 from  \cite{Bay12a} we now obtain the decomposition of the space $X$ with properties (i) and (ii). Finally,  (iii) follows from \cite[Theorem 25]{Bay12a} and (iv) from \cite[Theorem 26]{Bay12a}.
\end{proof}

%%%%%%%%%%%%%%%%%%%%%%%%%%%%%%%%%%%%%%%%%%%%%%%%%%%%%%%%%%%%%%%%%%

\section{Flows in nonautonomous networks}\label{flows}

Consider now a finite weighted network $G$ as in  Section \ref{networks} with incidence matrices $\Phi^-$ and $\Phi^+$. The time-dependent weights $\omega_{ij}(t)\ge 0$ in every vertex $\mv_i$ give the proportions of the incoming material to be distributed into the outgoing edges $\me_j$ at time $t$, where
\begin{equation*}\omega_{ij}(t)\equiv 0 \text{ if } \phi^-_{ij}= 0.
\end{equation*}
This condition reflects the fact that the edges of our network are fixed and the flow takes place only on the edges of the network. Note however that it might happen that no material is sent from the vertex $\mv_i$ into the edge $\me_j$ at time $t_0$ for some $t_0$,  meaning that $\omega_{ij}(t_0)=0$ even if $\phi^-_{ij}\ne 0$.
We store the weights in the  time-dependent \emph{weighted outgoing incidence matrix}   $\Phi_w^{-}(t)=\left( \phi_{w,ij}(t)\right) _{n\times m}$ defined as
\begin{equation*}
    \phi_{w,ij}^{-}(t):= \begin{cases}
   \omega_{ij}(t), & \mbox{if $\mv_i\stackrel{\me_j}{\longrightarrow}$}, \\
    0, & \text{otherwise.}
    \end{cases}
    \end{equation*}
The $m\times m$  time-dependent \emph{weighted adjacency matrix of the line graph} $ \bB_w(t)$ is  obtained by
\begin{equation}\label{bbt}
\bB_w(t):= \left(\Phi_w^{-}(t)\right)^T \Phi ^{+}.    
\end{equation}
Note that the nonzero entries of $\bB_w(t)$ are in one-to-one correspondence with the nonzero entries of the unweighted adjacency matrix~$\bB$ of the line graph. 
We assume that there is no absorption in the vertices, hence \eqref{w} holds for every $i\in\{1,\dots,n\}$ and all $t\in\RR_+$, and the matrices  $\bB_w(t)$ are all column-stochastic. 

By $G_t$ we will denote the network  at time $t$ obtained from the adjacency matrix $\bB_w(t)$. This means that $G_t\subseteq G$ where the edges of $G$ with no inflow at time $t$ are deleted.

Under these assumptions we study the following transport process in~$G$.
\begin{equation*}
(nF)\left\{
\begin{tabular}{rcll}
${\frac{\partial }{\partial t}u_{j}\left( x,t\right)} $ & $=$ & $
 \frac{\partial }{\partial
x}u_{j}\left( x,t\right) ,\, x\in (0,1),\, t\geq s,$ &\\[0.5em]
$ u_{j}\left( s,0\right) $ & $=$ & $ f_{j}\left(s\right) ,\, s\in (0,1),$ & $(IC)$ \\[0.5em]
$  \phi_{ij}^{-}u_{j}\left( 1,t\right) $ & $=$ & $
\omega_{ij}(t)\sum_{k=1}^m\phi _{ik}^{+}u_{k}\left(0,t\right) ,\, t\geq 0$
& $(nBC)$
\end{tabular}
\right.
\end{equation*}
for $i= 1, \dots, n$ and $j= 1, \dots, m$. It is of the form given in Section \ref{transport}. Note that the non-autonomous boundary conditions $(nBC)$ together with \eqref{w} imply the Kirchhoff law~(\ref{KL}).

In order to use the results from Section \ref{diffeq}, we now take the Banach space $X = L^1\left([0,1],\CC^m\right)$ and the difference operators $A_{\bB}(t)$ on $X$ associated to the family of matrices $\left(\bB_w(t)\right)_{t\in\RR}$ as defined in \eqref{diff-op}, hence
\begin{equation*}
A_{\bB}(t)f:=f' \text{ with domain } D\left(A_{\bB}(t)\right):= \left\{f \in W^{1,1}\left([0,1],\CC^m\right)\mid f(1)= \bB_w(t) f(0)\right\}. 
\end{equation*}

\begin{proposition}\label{bc-da}
The non-autonomous abstract Cauchy problem $(nACP)$ corresponding to $\left(A_{\bB},D\left(A_{\bB}(t)\right)\right)$ is an abstract version of the transport process in the time-depending network $(nF)$.
\end{proposition}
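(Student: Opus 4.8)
The plan is to set up a one-to-one correspondence between the scalar densities $u_j(x,t)$ living on the edges and a single vector-valued function $u(t)\in X=L^1([0,1],\CC^m)$ defined by $\bigl(u(t)\bigr)(x):=\bigl(u_1(x,t),\dots,u_m(x,t)\bigr)^{\top}$, and then to check that, under this identification, each of the three ingredients of $(nF)$ --- the transport equation, the initial condition $(IC)$, and the boundary condition $(nBC)$ --- matches exactly one ingredient of the $(nACP)$ associated with $\bigl(A_{\bB}(t),D(A_{\bB}(t))\bigr)$.

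The transport equation and the initial condition are immediate. Since the edges are parameterized against the direction of the flow, the system $\partial_t u_j=\partial_x u_j$ for $j=1,\dots,m$ is, componentwise, nothing but $\dot u(t)=u'(t)=A_{\bB}(t)u(t)$, where $'$ denotes the spatial derivative, and $(IC)$ becomes $u(s)=f_s$ with $f_s=(f_1,\dots,f_m)^{\top}$. So the only real content is the equivalence of the boundary condition $(nBC)$ with the membership condition $u(t)\in D(A_{\bB}(t))$, i.e. with the vector identity $u(1,t)=\bB_w(t)\,u(0,t)$.

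For this equivalence I would compute $\bB_w(t)=\bigl(\Phi_w^-(t)\bigr)^{\top}\Phi^+$ entrywise. Its $(j,k)$ entry equals $\sum_{i=1}^n \omega_{ij}(t)\,\phi_{ik}^+$, where I use that $\phi_{w,ij}^-(t)=\omega_{ij}(t)$ vanishes unless $\phi_{ij}^-=1$. The $j$-th component of $u(1,t)=\bB_w(t)u(0,t)$ is therefore $u_j(1,t)=\sum_{i=1}^n \omega_{ij}(t)\sum_{k=1}^m \phi_{ik}^+ u_k(0,t)$. The decisive structural fact is that every edge $\me_j$ has a unique tail vertex, so for each fixed $j$ there is exactly one index $i=i(j)$ with $\phi_{ij}^-=1$; all other terms in the outer sum drop out because $\omega_{ij}(t)=0$. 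The resulting single equation $u_j(1,t)=\omega_{i(j)j}(t)\sum_{k=1}^m \phi_{i(j)k}^+ u_k(0,t)$ is precisely the $(nBC)$ relation attached to the vertex $\mv_{i(j)}$, while for every other vertex $\mv_i$ the corresponding $(nBC)$ equation reads $0=0$ and is vacuous. Reading the computation in both directions yields the claimed equivalence of the boundary conditions.

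The main obstacle is purely the bookkeeping in this last step: one must keep the two incidence matrices and their roles straight (outgoing weights on the tail side via $\Phi_w^-(t)$, summation over incoming edges via $\Phi^+$) and exploit the single-tail property to see that the $n\cdot m$ scalar equations $(nBC)$ collapse to the $m$ vector equations encoded in $D(A_{\bB}(t))$. Once the boundary conditions are matched, $t\mapsto u(t)$ is a classical solution of the $(nACP)$ exactly when its components solve $(nF)$, which is the asserted equivalence.
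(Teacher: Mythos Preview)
Your proposal is correct and follows exactly the route indicated by the paper: the only nontrivial point is that $(nBC)$ is equivalent to the condition $g(1)=\bB_w(t)g(0)$ encoded in $D(A_{\bB}(t))$, and you verify this by the same computation with $\bB_w(t)=(\Phi_w^-(t))^{\top}\Phi^+$ together with the single-tail property of each edge. The paper's own proof merely states this equivalence and refers to \cite[Prop.~3.1]{Dor08}, so your argument simply supplies the details that the paper outsources.
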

\begin{proof}
We only need to observe that the non-autonomous boundary conditions $(nBC)$ of the problem $(nF)$ are hidden in the domain $D\left(A_{\bB}(t)\right)$:
\begin{equation*}
g \in W^{1,1}\left([0,1],\CC^m\right) \text{ satisfies } (nBC) \iff g(1)=\bB_w(t) g(0),
\end{equation*}
similarly as in~\cite[Prop.\ 3.1]{Dor08}.
\end{proof}

The well-posedness of the analogous problem in the autonomous case (even for infinite networks) accompanied with an explicit formula for the solution was shown in \cite[Prop.\ 3.3]{Dor08}.
By Propositions~\ref{wp} and~\ref{formula} we can prove a well-posedness result of our non-autonomous flow problem $(nF)$ for a special class of periodic time-dependent networks. 

\begin{corollary}\label{cor:well-posed}
If the mappings $t\mapsto\omega_{ij}(t)$ are absolutely continuous, then $(nF)$ is well-posed. \\
In particular, if in addition, the mappings $t\mapsto \bB_w(t)$ are 1-periodic, i.e. $\bB_w(t+1)=\bB_w(t)$ for all $t\in\RR$, then the unique classical solution $t\mapsto u(t,\cdot)$ to the non-autonomous flow problem $(nF)$ in $G$ is obtained by the evolution family in~(\ref{evol-fam-formula}) as 
\begin{equation}\label{evol-fam}
u(t,x)=\left(U(t,s)f_s\right)(x) = \bB_w^k(t+x)f_s(x+t-s-k),
\end{equation}
where $f_s$  is given by the initial conditions $(IC)$, $t\geq s$, $ x\in[0,1]$,  $k\le x+t-s < k+1$, and $k\in\NN_0$.
\end{corollary}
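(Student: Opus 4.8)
The plan is to deduce both assertions directly from Section~\ref{diffeq} by applying its abstract results to the concrete matrix family $\left(\bB_w(t)\right)_{t\in\RR}$. By Proposition~\ref{bc-da} the flow problem $(nF)$ coincides with the $(nACP)$ for the operators $\left(A_{\bB}(t),D(A_{\bB}(t))\right)$, so everything reduces to checking that the choice $B(t):=\bB_w(t)$ meets the regularity hypotheses required in Propositions~\ref{wp} and~\ref{formula}.

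For the first assertion I would verify the two hypotheses of Proposition~\ref{wp}. Uniform boundedness is automatic: by~\eqref{w} every entry of $\bB_w(t)$ lies in $[0,1]$, so $\sup_{t}\|\bB_w(t)\|<\infty$. For absolute continuity, recall from~\eqref{bbt} that $\bB_w(t)=\left(\Phi_w^-(t)\right)^{T}\Phi^+$ with $\Phi^+$ a fixed $0$--$1$ matrix; hence each entry of $\bB_w(t)$ is a finite linear combination, with constant coefficients, of the weights $\omega_{ij}(t)$. Since a finite linear combination of absolutely continuous scalar functions is again absolutely continuous, the matrix-valued map $t\mapsto\bB_w(t)$ is absolutely continuous as soon as all the maps $t\mapsto\omega_{ij}(t)$ are. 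Proposition~\ref{wp} then gives well-posedness of the associated $(nACP)$, and combined with Proposition~\ref{bc-da} also of $(nF)$.

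For the second assertion I would simply add the periodicity hypothesis. The map $t\mapsto\bB_w(t)$ is uniformly bounded and absolutely continuous by the preceding paragraph, and $1$-periodic by assumption; therefore Proposition~\ref{formula} applies verbatim with $B(t)=\bB_w(t)$ and yields the evolution family~\eqref{evol-fam-formula}. Evaluating that formula on the initial datum $f=f_s$ prescribed by $(IC)$ produces exactly~\eqref{evol-fam}.

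I do not anticipate any genuine difficulty: the entire content of the statement is the bookkeeping that the column-stochastic, absolutely continuous (and, for the second part, $1$-periodic) family $\bB_w(t)$ satisfies precisely the abstract assumptions imposed on $B(t)$ in Section~\ref{diffeq}. The only step carrying any content is the passage from scalar to matrix-valued absolute continuity, and this is immediate from the linearity of the map $M\mapsto M^{T}\Phi^+$ recorded above.
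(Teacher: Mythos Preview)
Your proposal is correct and matches the paper's intended approach: the corollary is stated without proof in the paper, as it follows immediately from Propositions~\ref{wp} and~\ref{formula} via the identification in Proposition~\ref{bc-da}, which is precisely the route you outline. The only additional detail you supply---checking uniform boundedness from~\eqref{w} and passing from scalar to matrix-valued absolute continuity via the linear formula~\eqref{bbt}---is the natural bookkeeping the paper leaves implicit.
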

     
Note that the period is assumed to be 1 only as a matter of convenience. We could take any natural number and modify the above formula appropriately. \\
Using Proposition \ref{asy} we obtain an asymptotically periodic behavior of our non-autonomous flows.  

\begin{theorem}\label{asy-flow}
Let the network $G_t$ be strongly connected for every $t\in\RR$ and let the mapping $t\mapsto \bB_w(t)$ be absolutely continuous and 1-periodic.
Then the flow evolution family \eqref{evol-fam} converges uniformly to a periodic  positive group in the sense of Proposition \ref{asy}.$(iv)$. Its period $\tau$ can be computed as  
\begin{equation*}
\tau=\lcm\left\{\gcd\{l\mid \me_{j_1},\dots \me_{j_l} \text{ form a cycle in }G_t\}\mid t\in \RR\right\}, 
\end{equation*}
where $\gcd$ denotes the greatest common divisor.
\end{theorem}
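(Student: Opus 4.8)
The plan is to derive the entire statement by applying Proposition~\ref{asy} to the family $B(t) = \bB_w(t)$ and then translating the abstract period $\tau = \lcm\{|\sigma(\bB_w(t))\cap\Gamma| \mid t\in[0,1]\}$ into combinatorial data of the networks $G_t$. First I would verify the hypotheses of Proposition~\ref{asy}. Continuity and $1$-periodicity of $t\mapsto\bB_w(t)$ are assumed, and the matrices $\bB_w(t)$ are (column-)stochastic by the no-absorption assumption~\eqref{w}. For irreducibility I would use that, by its very construction, the nonzero pattern of $\bB_w(t)$ coincides with the adjacency matrix of the line graph of $G_t$, the edges with no inflow at time $t$ being deleted. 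Since $G_t$ is strongly connected for every $t$, Lemma~\ref{irreduc} gives that this matrix is irreducible, and as irreducibility depends only on the zero pattern, $\bB_w(t)$ is irreducible as well. Proposition~\ref{asy} then yields at once the convergence of~\eqref{evol-fam} to a $\tau$-periodic positive group, with $\tau = \lcm\{|\sigma(\bB_w(t))\cap\Gamma| \mid t\in[0,1]\}$.

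It then remains to compute the local factor $|\sigma(\bB_w(t))\cap\Gamma|$ for each fixed $t$. Since $\bB_w(t)$ is irreducible and stochastic, Perron--Frobenius theory says its peripheral spectrum $\sigma(\bB_w(t))\cap\Gamma$ is exactly the group of $h(t)$-th roots of unity, where $h(t)$ is the index of imprimitivity (the period) of $\bB_w(t)$; in particular $|\sigma(\bB_w(t))\cap\Gamma| = h(t)$. I would then invoke the standard combinatorial characterization of the period of an irreducible nonnegative matrix, namely that $h(t)$ equals the greatest common divisor of the lengths of the cycles of the digraph associated to $\bB_w(t)$.

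The final step is to read this digraph off the network. The digraph associated to $\bB_w(t)$ is, up to reversal of all arcs (which leaves cycle lengths unchanged), the line graph of $G_t$: there is an arc between $\me_j$ and $\me_i$ precisely when $\me_j\to\mv\to\me_i$. Consequently a closed path of length $l$ in this digraph is exactly a collection of edges $\me_{j_1},\dots,\me_{j_l}$ forming a cycle in $G_t$, and conversely, so $h(t) = \gcd\{l \mid \me_{j_1},\dots,\me_{j_l} \text{ form a cycle in } G_t\}$. Substituting into $\tau$ and using $1$-periodicity (so that letting $t$ range over $[0,1]$ or over all of $\RR$ produces the same set of values) gives the claimed formula. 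I expect the main obstacle to be exactly this translation: making precise, via Perron--Frobenius, the identification of the number of peripheral eigenvalues with the period, and then matching cycles in the line-graph digraph of $\bB_w(t)$ with edge-cycles of $G_t$ carefully enough to be sure the lengths correspond.
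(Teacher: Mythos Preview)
Your proposal is correct and follows essentially the same route as the paper: verify the hypotheses of Proposition~\ref{asy} (stochasticity via~\eqref{w}, irreducibility via Lemma~\ref{irreduc} applied to $G_t$), apply it, and then identify $|\sigma(\bB_w(t))\cap\Gamma|$ with the index of imprimitivity and hence with the $\gcd$ of cycle lengths. The only difference is one of detail: the paper cites \cite{Min88} to pass directly from the index of imprimitivity of $\bB_w(t)$ to the $\gcd$ of cycle lengths in $G_t$, whereas you make explicit the intermediate observation that the digraph associated to $\bB_w(t)$ is the line graph of $G_t$ and that cycles there correspond bijectively, with the same lengths, to edge-cycles in $G_t$; this extra care is justified and fills in a step the paper leaves implicit.
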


\begin{proof} By \eqref{w}, the matrices  $\bB_w(t)$ are all column-stochastic. Since the graphs $G_t$ are all strongly connected, the matrices~$\bB_w(t)$ are all irreducible by Lemma~\ref{irreduc}.  Hence we can apply Proposition \ref{asy}.
For the expression for the period $\tau$ note that $|\sigma\left(\bB_w(t)\right)\cap\Gamma|$ equals the index of imprimitivity of the matrix $\bB_w(t)$ (see \cite[Definition III.1.1]{Min88}), which is the same as the greatest common divisor of all cycle lengths in the network $G_t$, cf.~\cite[Theorem IV.3.3]{Min88}.  \end{proof}

\begin{remark}\label{rem:weights-positive}
If we assume that the nonzero weights remain strictly positive in time, i.e.
\begin{equation*}
\phi^-_{ij}\ne 0 \Longrightarrow\omega_{ij}(t)\ne 0\quad\text{ for all }t\in\RR,
\end{equation*}
then the index of imprimitivity of $\bB_w(t)$ does not change in time either, and the period $\tau$ can be computed simply as
\begin{equation*}
\tau=\gcd\{l\mid \me_{j_1},\dots \me_{j_l} \text{ form a cycle in }G\}.
\end{equation*}
This means that in the case of non-disappearing edges in the network, the strictly positive weights do not have any impact on the period and the asymptotic behavior remains the same as in the autonomous case (see \cite[Corollary 4.7]{KS05}).
\end{remark} 

%%%%%%%%%%%%%%%%%%%%%%%%%%%%%%%%%%%%%%%%%%%%%%%%%%%%%%%%%%%%%%%%%%
      
\subsection{Examples}

\begin{enumerate}
\item We consider the family of networks $G_t$ depicted in Figure~\ref{example-1}. The weights on the edges~$\me_1, \, \me_2,\,  \me_3, \, \me_6$ are constant, the weights on the edges $\me_4$ and $\me_5$ vary in time. Each edge has a nonzero flow of material on it at every time. The corresponding adjacency matrix~$\bB_w(t)$ is 

$$\bB_w(t) =\begin{pmatrix} 0 & 0 & 0 & 1 & 0 & 0 \\
 1 & 0 & 0 &0 & 0 & 0  \\
 0 & 1 & 0 &0 & 0 & 1  \\
 0 & 0 & \frac 14 +\frac 12 \cos^2(\pi t) &0 & 0 & 0 \\[1mm]
 0 & 0 & \frac 14 +\frac 12 \sin^2(\pi t) &0 & 0 & 0 \\
 0 & 0 & 0 &0 & 1 & 0 
           \end{pmatrix}.$$ 
           
The mapping $t \mapsto \bB_w(t)$ is 
absolutely continuous, so by Corollary~\ref{cor:well-posed}, the corresponding non-autonomous network flow problem $(nF)$ is well-posed. 
The matrices $\bB_w(t)$ are all column stochastic, 1-periodic, and, since the graphs $G_t$ are strongly connected for all~$t \in \RR$, irreducible. We can therefore apply Theorem~\ref{asy-flow}. Note that we are in the case of Remark~\ref{rem:weights-positive} since each edge carries a nonzero flow of material at every time~$t \in \RR$. For every~$t$, the graph $G_t$ contains a cycle of length~$3$ and a cycle of length~$4$. 
Hence, the flow evolution family converges uniformly to a periodic positive group with period $\tau = \gcd\{3,4\}=1$ in the sense of Proposition~\ref{asy}.(iv), meaning that the flow on this graph asymptotically behaves periodically with period~$\tau=1$.

\begin{figure}
\begin{center}
\includegraphics[scale=0.4]{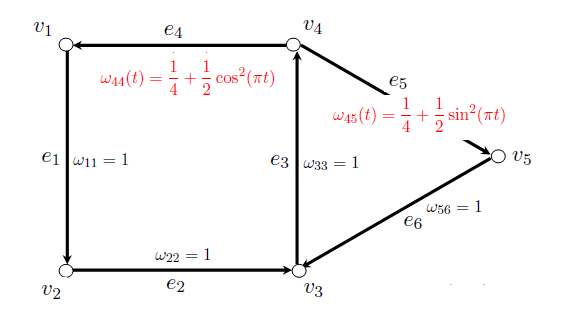}
\end{center}
\caption{The flow on this network is asymptotically periodic with period~$1$.} \label{example-1}
\end{figure}

\medskip
\item 
The networks in Figure~\ref{example-2} all belong to the family of networks~$G_t$ given by the adjacency matrix  $\bB_w(t)$ equaling
\begin{center}
\includegraphics[scale=0.45]{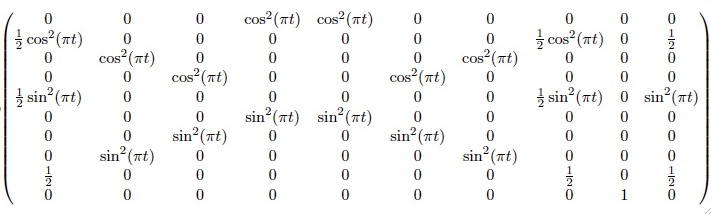}
\end{center}       
Note that the weights on edges~$\me_9$ and~$\me_{10}$ do not vary in time, but those on the remaining edges do. This is indicated in Figure~\ref{example-2} (i). Hence, some edges do not carry any flow for some $t\in \RR$: For $t \in \ZZ$, the weights on the edges~$\me_5, \,  \me_6,\,  \me_7, \, \me_8$ are zero (Figure~\ref{example-2} (ii)), whereas for $t \in \frac 12\ZZ\setminus\{0\}$, there is no flow on the edges~$\me_1, \, \me_2,\,  \me_3, \, \me_4$ (Figure~\ref{example-2} (iii)). Since~$\me_9, \, \me_{10}$ form a cylce of length~$2$, the greatest common divisor of all cycle lengths in the network is equal to 2 for all times~$t \in \RR$, and hence also the least common multiple appearing in Theorem~\ref{asy-flow} is equal to 2. 
 
\begin{figure}
\begin{center}
\includegraphics[scale=0.35]{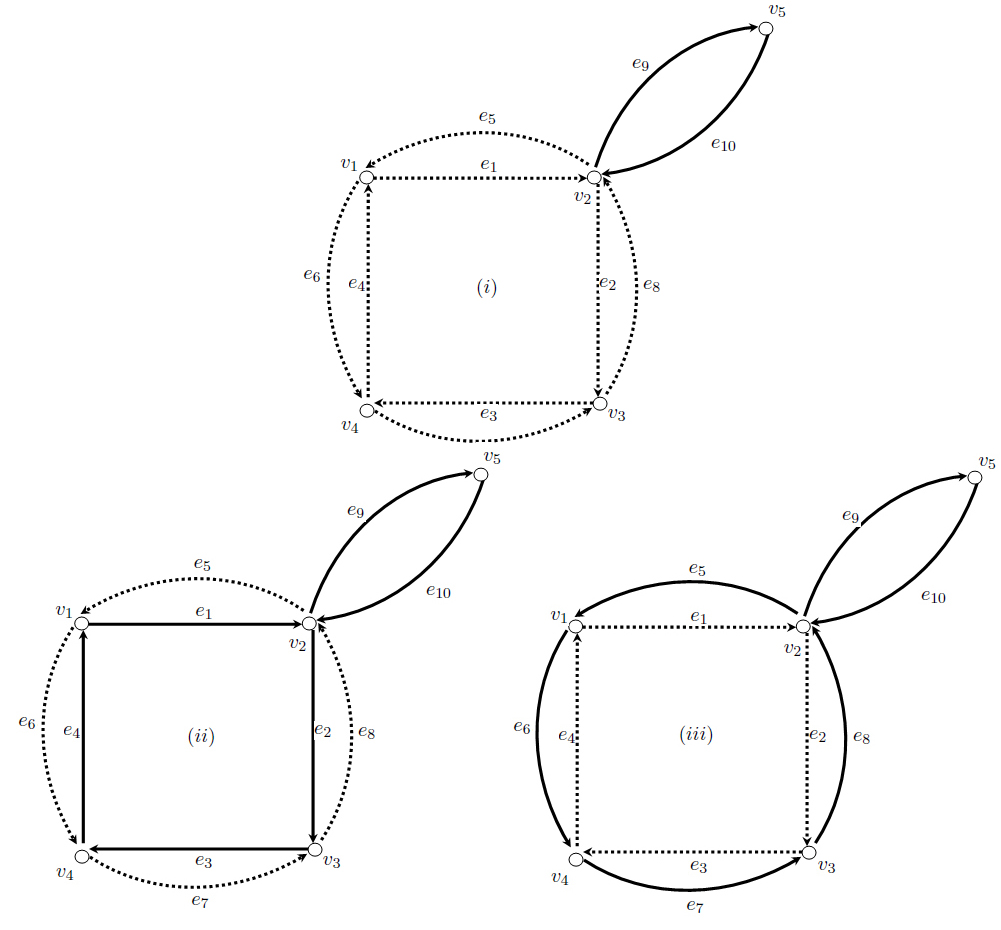}
\end{center}
\caption{The three possible states of the family of networks~$G_t$ in Example~2: Either, all edges of the network carry flow (Figure (i)), or there is no flow on the edges belonging to the outer cycle of length~$4$ (Figure (ii)) or to the inner cycle of length~$4$ (Figure (iii)).} \label{example-2}
\end{figure}

Again, we obtain well-posedness of the corresponding flow problem $(nF)$ by Corollary~\ref{cor:well-posed} since the mapping $t \mapsto \bB_w(t)$ is continuously differentiable. Note that the graphs~$G_t$ are strongly connected for all~$t \in \RR$ and fulfill all assumptions required by Theorem~\ref{asy-flow}, implying that the flow evolution family converges uniformly to a periodic positive group with period $\tau=2$.

\end{enumerate}

%%%%%%%%%%%%%%%%%%%%%%%%%%%%%%%%%%%%%%%%%%%%%%%%%%%%%%%%%%%%%%%%%%

\section{Application to air traffic flow management}\label{ATFM}

The following application of our results is motivated by the (real world) regulation of air traffic, called  {\em Air Traffic Flow Management}. Its goal is to optimize air traffic flow, i.e.,
limiting the density of aircraft in certain regions of airspace as well as operating efficient
routes subject to weather constraints. These tasks are currently prescribed by playbooks
established over time and based on controller experience. However, one of the aims 
 consists in providing a mathematical model of air traffic flow allowing to  apply mathematical
 control techniques.

\subsection{Modelling air traffic flow}
Different mathematical models for optimization strategies have been elaborated. One approach is a {\em Eulerian} model advocated
by Menon et al., see~\cite{MSB04}, where the airspace is divided into line elements corresponding to portions of airways
on which the density of aircraft can be described as a function of time and of the coordinate along the line. This approach
 focuses on the conservation of aircraft on the line elements and uses partial differential equations to describe the time
 evolution of the process. The equations used in this model also appear naturally in highway traffic and were introduced by
 Lighthill-Whitham \cite{LW56} and Richards  \cite{Ric56}.
This Eulerian network model of air traffic flow has been considered
in several works, e.g. \cite{MSB04}, \cite{BRT04a, BRT04b, BRT06}, \cite{SM05}, 
\cite{RSWB06}, \cite{SSB07}.
We also refer to the monograph by M.~Garavello and B.~Piccoli \cite{GP06} where networks
of interconnected roads are modeled and studied, and where the Lighthill-Whitham-Richards model is considered on network structures (junctions). 

We use a simplified linear Eulerian network approach. This fits into our scenario
since the traffic flow is considered as a transport process of
aircraft along the edges of a directed graph with boundary
conditions in the vertices. In this context, the vertices correspond
to different destinations (or airports) or to bifurcation points of
routes in the sky, and the edges model the given connections between
them (the above mentioned line elements).

\subsection{The allocation matrix}\label{alloc-mat}
In the  literature (e.g., \cite{BRT06}, \cite{SSB07}), the
transport processes are usually studied only on an isolated junction
of the network. An example is given in Figure~\ref{junction},
showing a junction with two incoming edges $\me_1, \me_2$ (called
{\em links} in \cite{SSB07}) and three outgoing edges $\me_3, \me_4,
\me_5$.

\begin{figure}[H]
\begin{center}
\includegraphics[width=0.4\textwidth]{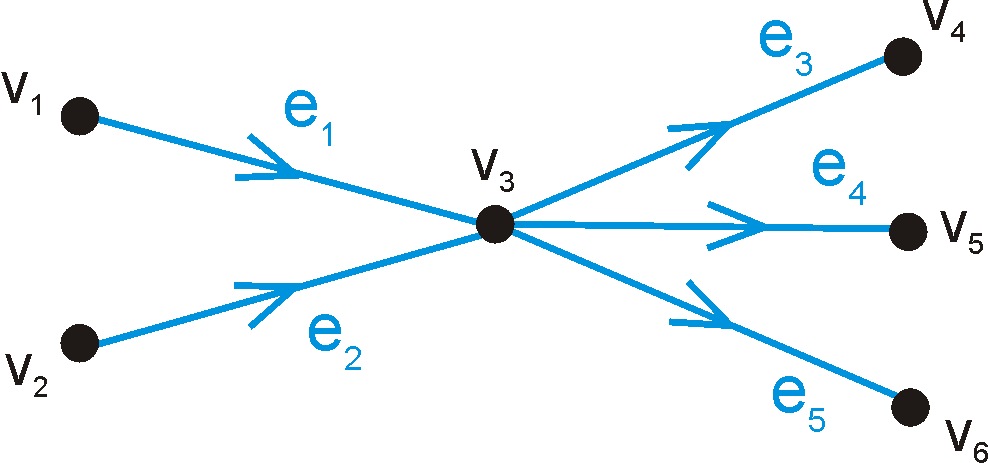}
\end{center}
\caption{A junction with two incoming and three outgoing edges.}
\label{junction}
\end{figure}

The relation between the incoming and outgoing air traffic flow at a
junction is prescribed by the so-called {\em junction allocation matrix }
$M(t) = \left(m_{ij}(t)\right)$ for $1\leq i \leq p$ and $p+1 \leq j
\leq p+q$, where $0 \leq m_{ij}(t) \leq 1 $ denotes the proportion
of aircraft from incoming link $i$ going to the outgoing link $j$ at
time $t$, and 
\begin{equation}\label{allocation-sum}
\sum_{j= p+1}^{p+q} m_{ij}(t) = 1
\end{equation} is required for
all $1 \leq i \leq p$ and $t\ge 0$ (see e.g. \cite[Section 2.3]{SSB07} for this
definition). 

As an example, for the allocation matrix $M(t_0)$ of the junction in Figure~\ref{junction} at time~$t_0$, we choose
 \begin{eqnarray*} & & \begin{array}{ccccc} \me_3 \quad & \me_4 \quad & \me_5 \quad \end{array}  \\
                                     & & \begin{array}{ccccc} \quad \uparrow  & \quad \uparrow & \quad \uparrow \end{array} \\
                                 M(t_0) &=&  \begin{pmatrix}
                                                                                        1/2 & 1/3& 1/6\\
                                                                                0 & 1/4  & 3/4
                                    \end{pmatrix}   \begin{array}{cc} \leftarrow & \me_1 \\
                                                                                                        \leftarrow & \me_2\end{array}
 \end{eqnarray*}
This means that at time~$t_0$, half of the airplanes arriving from edge $\me_1$
continue their way on edge $\me_3$, one third of them chooses
$\me_4$ and the remaining ones travel to edge $\me_5$, whereas none
of the airplanes coming from edge $\me_2$ go in the direction of
$\me_3$, but one forth of them to $\me_4$ and the remaining
three-fourths to $\me_5$. Note that \eqref{allocation-sum} holds. 

\subsection{Our setting and results}
We now consider a strongly connected directed graph $G$ consisting of $n$ vertices and $m$ edges, and some air traffic flow on it, which, according to the linear Eulerian model, can be considered as a transport process. The boundary conditions of this process are given in the (transposed) {\em network allocation matrix} which we now define on the whole network (not only on a single junction) as 
$$\bM(t) := \left(m_{kl}(t)\right) \text{ for } k,\, l \in \{1, \dots, m\}, \, t  \in \RR, $$
where $0 \leq m_{kl}(t)\leq 1$ denotes the proportion of aircrafts arriving from edge $\me_l$ leaving into edge~$\me_k$ at time~$t$. We imply that the flow only takes place on the edges of the network $G$ which is given by the (transposed) adjacency matrix of the line graph $\bB=(b_{kl})_{m\times m}$ and we set
\begin{equation*}
m_{kl}(t) \equiv 0 \text{ if } b_{kl}=0.
\end{equation*}
We further require that
\begin{equation}\label{m-sum}
\sum_{k= 1}^{m} m_{kl}(t) = 1\quad\text{for all } l\in\{1,\dots,m\} \text{ and  } t\ge 0.
\end{equation}
This assumption corresponds to Equation~\eqref{w} (the index~$l$ now runs over the edges of the graph) and  makes the allocation matrix $\bM(t)$ column stochastic. 

Since every edge of the network only has one end point and one starting point, the transpose $M^T(t)$ of every junction allocation matrix $M(t)$ given in Section~\ref{alloc-mat} is a submatrix of the bigger  matrix $\bM(t)$. Hence  our network allocation matrix $\bM(t)$ contains all the information stored in the separate junction allocation matrices and 
\eqref{m-sum} corresponds to  the equations  (\ref{allocation-sum}) in every junction.

We now model the transport process in the network as in Section \ref{transport} and obtain 
 the following air traffic flow problem. 
\begin{equation*}
(ATF)\left\{
\begin{tabular}{rcll}
${\frac{\partial }{\partial t}u_{j}\left( x,t\right)} $ & $=$ & $
 \frac{\partial }{\partial
x}u_{j}\left( x,t\right) ,\, x\in (0,1),\, t\geq s,$ &\\[0.5em]
$ u_{j}\left( s,0\right) $ & $=$ & $ f_{j}\left(s\right) ,\, s\in (0,1),$ &  \\[0.5em]
$  u_{j}\left( 1,t\right) $ & $=$ & $
\sum_{k=1}^m m_{jk} (t) u_{k}\left(0,t\right) ,\, t\geq 0,$
& 
\end{tabular}
\right.
\end{equation*}
for  $j= 1, \dots, m$. Observe that our non-autonomous boundary conditions together with~\eqref{m-sum} imply the Kirchhoff law~(\ref{KL}) in the vertices.

We now proceed as in Section \ref{flows}. Problems $(nF)$ and $(ATF)$ have different solutions since their boundary conditions differ. The conditions in $(ATF)$ contain more information and are more demanding. We could look at the problem $(ATF)$ as a subproblem of $(nF)$ which can also be implemented by constructing a larger graph (the precise implementation is done in \cite[Section 7.3.2]{dissBritta}).
In this way we would easily obtain well-posedness, however the formulae for the solutions and period would relate to the artificially created larger network and it would be difficult to relate it to the original problem. Therefore we rather repeat the steps taken for $(nF)$ in Section \ref{flows}, now for the problem $(ATF)$ instead. We will see that the main difference is that the weighted adjacency matrix $\bB_w(t)$ is replaced by the network allocation matrix $\bM(t)$. These matrices are different but share many important properties (such as positivity, irreducibility, etc.).

We will assume that the entries of $\bM(t)$ vary in an absolutely continuous and periodic way. This assumption is natural if we think of periodically changing flight schedules (day-night rhythms, daily or weekly periods), and without loss of generality we may assume that the period is 1. We also assume that the network $G_t$ remains strongly connected at all times $t$, even if some edges of~$G$ might not carry any flow at certain times.

The state space of this system can be modeled by the Banach space 
$X = L^1 \left([0,1],\CC^m\right),$ and the  transport process can then be described via the difference operators
$$A_{\bM}(t):D\left(A_{\bM}(t)\right)\to X$$ defined by
\begin{equation*}\label{allocation-diff-op}
A_{\bM}(t)f:=f' \text{ with domain } D\left(A_{\bM}(t)\right):= \left\{f \in W^{1,1}\left([0,1],\CC^m\right)\mid f(1)= \bM(t) f(0)\right\}  
\end{equation*}
for $t\in\RR$. 
As in Proposition~\ref{bc-da}, we can see that the non-autonomous abstract Cauchy problem 
\begin{equation*} \label{allocation-acp}
\left\{ \begin{aligned}\dot{u}\left( t\right) &=A_{\bM}(t)u(t), \quad t\ge s, \\ 
u(s) &= f_s \in X. \end{aligned}\right.
\end{equation*}
corresponds to the transport problem (ATF).
Applying our results from Section~\ref{diffeq}, we obtain the following well-posedness result together with a description of the asymptotic shape of the solutions. 

\begin{theorem}\label{thm:ATFM}
Let  $t\mapsto \bM(t)$ be an absolutely continuous  $1$-periodic mapping. For every $t\ge 0$ let the graphs $G_t$ be strongly connected and the matrices $\bM(t)$ column stochastic. Then the non-autonomous transport problem $(ATF)$ is well-posed. Its unique classical solution $t\mapsto u(t,\cdot)$ is given by the flow evolution family as 
\begin{equation}\label{evol-fam-ATFM}
u(t,x)=\left(U(t,s)f_s\right)(x) = \bM^k(t+x)f_s(x+t-s-k),
\end{equation}
where $f_s$  is the initial distribution of aircraft flow, $t\geq s$, $ x\in[0,1]$,  $k\le x+t-s < k+1$, and $k\in\NN_0$.

The flow evolution family converges uniformly to a periodic positive group in the sense of Proposition \ref{asy}.(iv)  with period 
\begin{equation*}
\tau=\lcm\left\{\gcd\{l\mid \me_{j_1},\dots \me_{j_l} \text{ form a cycle in }G_t\}\mid t\in \RR\right\}.
\end{equation*}

\end{theorem}

\begin{example}
A small example is shown in Figure~\ref{example-3}. In this network, one third of the flow arriving from edge~$\me_1$ is continuing its way into edge~$\me_3$, the remaining proportion of two thirds flows into the edge~$\me_4$. The proportion of the flow arriving from edge~$\me_2$ and continuing into edges~$\me_3$ and~$\me_4$, respectively, varies in time. The corresponding network allocation matrix is
$$\bM(t) =\begin{pmatrix} 0 & 0 & 0 & 0 & 1 & 0 \\
 0 & 0 & 0 &0 & 0 & 1  \\
\frac 13 &\frac 14 +\frac 12 \cos^2(\pi t) & 0 &0 & 0 & 0  \\[1mm]
\frac 23 & \frac 14 + \frac 12 \sin^2(\pi t)  &0 & 0 & 0 & 0\\
 0 & 0 & 1 &0 & 0 & 0 \\
 0 & 0 & 0 &1 & 0 & 0 
\end{pmatrix}.$$
Note that the allocation matrix contains more information on the flow than the corresponding adjacency matrix $\bB_w(t)$ used in Section \ref{flows}.

According to Theorem~\ref{thm:ATFM}, the air traffic flow problem $(ATF)$ on this network is well-posed. The flow evolution family describing the solutions to the problem is given by the powers of $\bM(t)$ and the initial distribution of aircraft flow as in Equation~\eqref{evol-fam-ATFM}, and converges uniformly to a periodic positive group with period~$\tau=3$.  

\begin{figure}
\begin{center}
\includegraphics[scale=0.25]{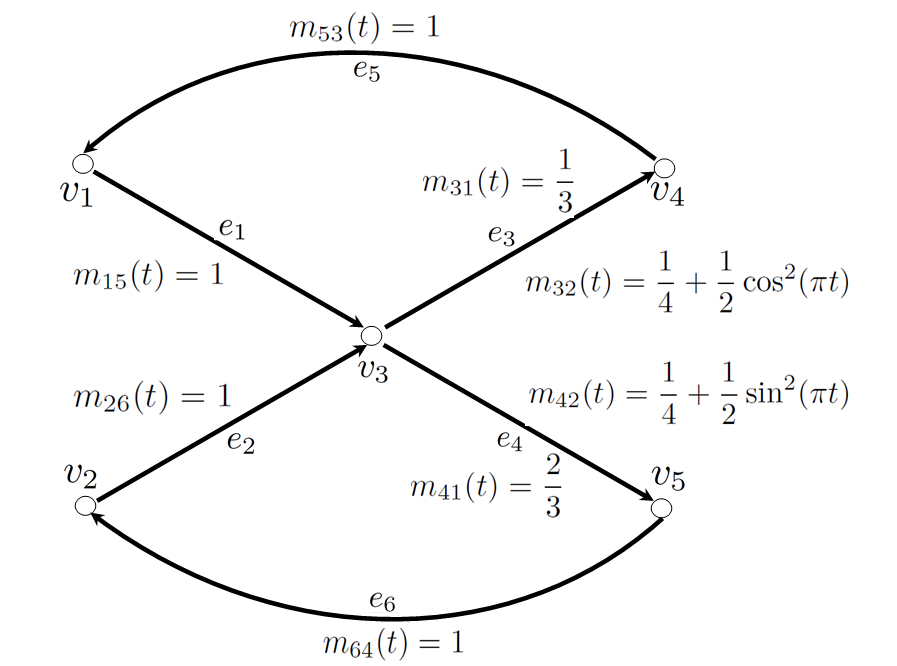}
\end{center}
\caption{A toy example to illustrate Theorem~\ref{thm:ATFM}.} \label{example-3}
\end{figure}
\end{example}

\begin{remark}
All our results are obtained under the assumption on absolute continuity of the time-varying traffic distribution coefficients.
We are aware of the limitations  this condition poses for the real-life applications. 
 We believe one can reformulate our results for the case of  piecewise absolutely continuous or even only measurable coefficients. This would however demand an appropriate formulation and corresponding proofs of the abstract results on evolution families we use from \cite{Bay12a}.  Therefore we leave this task for our future considerations.  
\end{remark}

\medskip

\medskip

\end{document}